\newtheorem{thm}{Theorem}[section]
\newtheorem{prop}[thm]{Proposition}
\newtheorem{cor}[thm]{Corollary}
\newtheorem*{obs*}{Observation}
\theoremstyle{definition}
\newtheorem{defn}[thm]{Definition}
\newtheorem{exmp}[thm]{Example}
\newenvironment{customqu}[1]
{\innercustomqu}
{\endinnercustomqu}
\theoremstyle{remark}
\numberwithin{equation}{section}
\newcommand{\R}{\mathbb{R}}  
\newcommand{\C}{\mathbb{C}}  
\newcommand{\Z}{\mathbb{Z}}  
\newcommand{\N}{\mathbb{N}}  
\newcommand{\PP}{\mathbb{P}}  
\newcommand\quotient[2]{
	\mathchoice
	{
		\text{\raise1ex\hbox{$#1$}\Big/\lower1ex\hbox{$#2$}}%
	}
	{
		#1\,/\,#2
	}
	{
		#1\,/\,#2
	}
	{
		#1\,/\,#2
	}
}
\begin{document}

\title[Separating Cones: Some Properties \& Open Problems]{Separating Cones defined by Toric Varieties: Some Properties and Open Problems}
\author{Charu Goel}
\address{Department of Basic Sciences, Indian Institute of Information Technology, Nagpur, India}
\email{charugoel@iiitn.ac.in}
\author{Sarah Hess}
\address{Department of Mathematics and Statistics, University of Konstanz, Konstanz, Germany}
\email{Sarah.Hess@uni-konstanz.de}
\author{Salma Kuhlmann}
\address{Department of Mathematics and Statistics, University of Konstanz, Konstanz, Germany}
\email{Salma.Kuhlmann@uni-konstanz.de}
\begin{abstract}
	In 1888, Hilbert proved that the cone $\mathcal{P}_{n+1,2d}$ of positive semidefinite forms in $n+1$ variables of degree $2d$ coincides with its subcone $\Sigma_{n+1,2d}$ of those forms that are representable as finite sums of squares if and only if $(n+1,2d)=(2,2d)_{d\geq 1}$ or $(n+1,2)_{n\geq 1}$ or $(3,4)$. These are the \textit{Hilbert cases}. In \cite{GHK, GHK2}, we applied the Gram matrix method to construct cones between $\Sigma_{n+1,2d}$ and $\mathcal{P}_{n+1,2d}$, defined by projective varieties containing the Veronese variety. In particular, we introduced and examined a specific cone filtration $$\Sigma_{n+1,2d}=C_0\subseteq \ldots \subseteq C_n \subseteq C_{n+1} \subseteq \ldots \subseteq C_{k(n,d)-n}=\mathcal{P}_{n+1,2d}$$ and determined each strict inclusion in non-Hilbert cases. This gave us a refinement of Hilbert's 1888 theorem.  Here, $k(n,d)+1$ is the dimension of the vector space of forms in $n+1$ variables of degree $d$. 

	In this paper, we show that the 
	intermediate cones $C_i$'s are closed and describe their interiors and boundaries. We discuss the membership problem for the $C_i$'s, present open problems concerning their dual cones 
	and generalizations to cones defined by toric varieties. 

\end{abstract}
\maketitle 
\section{Introduction}

A real form (i.e., a homogeneous polynomial) is called 
\textit{(globally) positive semi-definite (PSD)} if it only takes non-negative values and is called a \textit{sum of squares (SOS)} if it is a finite sum of squares of half degree real forms. Let $\mathcal{F}_{n+1,l}$ denote the vector space of real forms in $n+1$ variables of degree $l$, where $n,l\in\N$. Call $f\in\mathcal{F}_{n+1,l}$ a \textit{$(n+1)$-ary $l$-ic}. For $K\subseteq\R^{n+1}$, $f\in\mathcal{F}_{n+1,l}$ is called \textit{locally PSD} on $K\subseteq\R^{n+1}$, if $f(x)\geq 0$ for all $x\in K$. In the special case of $K=\R^{n+1}$, $f$ is (globally) PSD. 
For $d\in\N$, the set of all PSD forms in $\mathcal{F}_{n+1,2d}$, denoted by $\mathcal{P}_{n+1,2d}$, is a closed pointed full-dimensional convex cone (see \cite{Reznick}) that contains no straight line. 
The set of all SOS forms in $\mathcal{F}_{n+1,2d}$, denoted by $\Sigma_{n+1,2d}$, is also a closed pointed full-dimensional convex cone (see \cite{Reznick}). Clearly, $\Sigma_{n+1,2d}\subseteq\mathcal{P}_{n+1,2d}$. It is crucial 
to determine when the inverse inclusion holds as well. 
%
Hilbert's 1888 Theorem states that $\Sigma_{n+1,2d} = \mathcal{P}_{n+1,2d}$ exactly in the \textit{Hilbert cases} $(n+1,2d)$ with
$n+1=2$ or $2d=2$ or $(n+1,2d)=(3,4)$. Therefore, in any \textit{non-Hilbert case},
$\Sigma_{n+1,2d}\subsetneq\mathcal{P}_{n+1,2d}$. 
	The first explicit examples of a PSD-not-SOS ternary sextic and a PSD-not-SOS quaternary quartic were given by Motzkin \cite{Motzkin} and Robinson \cite{Rob}, respectively. 
Over the years a significant interest in geometric properties of the cones $\Sigma_{n+1,2d}$ and $\mathcal{P}_{n+1,2d}$ has been developed. For instance, both 
are convex semialgebraic sets. Other important geometric properties include being a spectrahedron and a spectrahedral shadow. A \textit{spectrahedron} is the preimage of the cone of positive semidefinite real matrices under some affine linear map and a \textit{spectrahedral shadow} is the image of a spectrahedron under an affine linear map. In particular, $\Sigma_{n+1,2d}$ is a spectrahedral shadow, 
but $\mathcal{P}_{n+1,2d}$ is not in the non-Hilbert cases (see \cite{Scheiderer}). 
%

For $n,d\in\N$, let $k(n,d):=\dim(\mathcal{F}_{n+1,d})-1=\binom{n+d}{n}-1$ and write $k$ instead of $k(n,d)$ whenever $n,d$ are clear from the context. In \cite{GHK, GHK2}, we constructed and examined a specific cone filtration 
\begin{equation}\label{int:eq1} \Sigma_{n+1,2d}=C_0 \subseteq \ldots \subseteq C_n \subseteq C_{n+1} \subseteq \ldots\subseteq C_{k(n,d)-n}=\mathcal{P}_{n+1,2d}\end{equation} along $k(n,d)-n+1$ projective varieties containing the Veronese variety via the Gram matrix method introduced in \cite{CLR}. To this end, we ordered the set $I_{n+1,d}:=\{\alpha\in\N_0^{n+1}\mid\vert\alpha\vert=d\}$ lexicographically. This gave us the ordered set $\{\alpha_0,\ldots,\alpha_{k(n,d)}\}$ and allowed us to enumerate the monomial basis of $\mathcal{F}_{n+1,d}$ along the exponents $\alpha_j=(\alpha_{j,0},\ldots,\alpha_{j,n})$ by setting $m_j(X):=X^{\alpha_j}$ for $j=0,\ldots,k(n,d)$. We furthermore introduced the surjective \textit{Gram map}
\begin{eqnarray*} \mathcal{G} \colon \mathrm{Sym}_{k+1}(\R) & \to & \mathcal{F}_{n+1,2d} \\
	A				& \mapsto & f_A(X):=q_A(m_0(X)\ldots m_k(X))
\end{eqnarray*} and the \textit{(projective) Veronese embedding}
\begin{eqnarray*} V\colon \PP^n & \to & \PP^k \\
\, [x] & \mapsto & [m_0(x):\ldots:m_k(x)].
\end{eqnarray*} We observed \begin{eqnarray}
\label{Kern} \mathcal{G}^{-1}(f)&=& \{A\in\mathrm{Sym}_{k+1}(\R)\mid q_{A-A_f} \mbox{\ vanishes\ on\ } V(\PP^n)\}, \\
\label{CharSOS2}
\Sigma_{n+1,2d}
&=&\{f\in\mathcal{F}_{n+1,2d}\mid \exists\ A\in \mathcal{G}^{-1}(f) \colon q_A\vert_{_{\PP^{k}(\R)}}\geq 0\}, \\
\label{CharPSD} 
\mathcal{P}_{n+1,2d} &=&\{f\in\mathcal{F}_{n+1,2d}\mid \exists\ A\in \mathcal{G}^{-1}(f) \colon q_A\vert_{_{\mathcal{V}(\PP^n)(\R)}}\geq 0\}.
\end{eqnarray}
This motivated us to define the convex cone $$C_W:=\{f\in\mathcal{F}_{n+1,2d}\mid \exists\ A\in \mathcal{G}^{-1}(f) \colon q_A\vert_{_{W(\R)}}\geq 0\}$$ for a set $W\subseteq\PP^k$. Clearly, $\Sigma_{n+1,2d}=C_{\PP^k}$ and $\mathcal{P}_{n+1,2d}=C_{V(\PP^n)}$. Thus, if $V(\PP^n)\subseteq W$, then $C_W$ is an intermediate cone between $\Sigma_{n+1,2d}$ and $\mathcal{P}_{n+1,2d}$. Consequently, $C_W$ is pointed full-dimensional and contains no straight line.

In particular, for $i=0,\ldots,k-n$, we let $$H_i:=\{[z]\in\PP^k\mid\exists\ x\in \C^{n+1}\colon(z_0,\ldots,z_{n+i})=(m_0(x),\ldots,m_{n+i}(x))\}$$ and $V_i\subseteq \PP^k$ be the Zariski closure of $H_i$ in $\PP^k$. 
Hence, setting $C_i:=C_{V_i}$, we obtained the cone filtration
\begin{equation} \label{ConeFiltration}
\Sigma_{n+1,2d}=C_0\subseteq \ldots \subseteq C_{k-n}=\mathcal{P}_{n+1,2d}.
\end{equation}

For later purpose, we recall from \cite[Theorem 2.2]{GHK} that, for $i=1,\ldots,k-n$, $V_i$ is the projective closure of the affine variety $$K_i:=\mathcal{V}(q_1(1,Z_1,\ldots,Z_k),\ldots,q_i(1,Z_1,\ldots,Z_k))\subseteq\C^k$$ under the embedding 
$$\begin{array}{cccc} \phi\colon&\C^k&\to&\PP^k \\ &(z_1,\ldots,z_k)&\mapsto& [1:z_1:\ldots:z_k].
\end{array}$$ Here, $q_j(Z):=Z_0Z_{n+j}-Z_{s_j}Z_{t_j}\in\R[Z]:=\R[Z_0,\ldots,Z_k]$ for $$s_j:=\min\{s\in\{1,\ldots,k\}\mid \exists\ t\in\{s,\ldots,k\}\colon \alpha_{s}+\alpha_{t}=\alpha_0+\alpha_{n+j}\}$$ and $t_j\in\{s_j,\ldots,k\}$ such that $\alpha_{s_j}+\alpha_{t_j}=\alpha_0+\alpha_{n+j}$. 

In \cite{GHK,GHK2}, we determined each strict inclusion in (\ref{int:eq1}) in non-Hilbert cases by establishing 
\begin{equation}\label{int:separation} \begin{cases} C_0=\ldots=C_n=C_{n+1}\subsetneq \ldots \subsetneq C_{k-n}, & \mbox{if } n=2 \\
	C_0=\ldots=C_n\subsetneq C_{n+1}\subsetneq \ldots \subsetneq C_{k-n}, & \mbox{else} \end{cases}
\end{equation} and showed that none of the strictly separating $C_i$'s is a spectrahedral shadow.
In this paper, we 
further examine the $C_i$'s for closure, interior and boundary (see \Cref{Closure}). In \Cref{problems}, we discuss open problems (originating from \cite{Hess}) concerning membership tests for the $C_i$'s and their dual cones. Finally, we consider potential generalizations to cones defined by toric varieties.
\section{Closure, Interior and Boundary} \label{Closure}

All norms on the finite-dimensional $\R$-vector spaces $\mbox{Sym}_{k+1}(\R)$, $\mathcal{F}_{n+1,2d}$, respectively, $\mathcal{F}_{k+1,2}$ are equivalent. Hence, they induce the same topology. 

\subsection{Closure} Like $\Sigma_{n+1,2d}$ and $\mathcal{P}_{n+1,2d}$, each of the $C_i$'s is also closed as we show below.
\begin{thm} \label{prop:Ciclosed} For $i=0,\ldots,k-n$, $C_i$ is closed.
\end{thm}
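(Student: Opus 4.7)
My plan is to present $C_i$ as the image of a closed convex cone under the linear Gram map and to apply a standard convex-analysis criterion. Set
\[
\widetilde{C}_i := \{A \in \mathrm{Sym}_{k+1}(\R) : q_A|_{V_i(\R)} \geq 0\},
\]
which is closed, being the intersection of the closed half-spaces $\{A : q_A(z) \geq 0\}$ for $[z] \in V_i(\R)$. By definition $C_i = \mathcal{G}(\widetilde{C}_i)$, and a classical result of Rockafellar states that the image of a closed convex cone $K$ under a linear map $L$ is closed whenever $K \cap \ker(L)$ is a linear subspace. It therefore suffices to show that $\widetilde{C}_i \cap \ker(\mathcal{G})$ is a linear subspace.

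To establish this, take any $A \in \widetilde{C}_i \cap \ker(\mathcal{G})$ and use the ``partial Veronese'' parametrization: for each $(x, w) \in \R^{n+1} \times \R^{k-n-i}$, the point $[m_0(x) : \cdots : m_{n+i}(x) : w_1 : \cdots : w_{k-n-i}]$ lies in $H_i(\R) \subseteq V_i(\R)$. Evaluating $q_A$ at such a point yields the polynomial quadratic in $w$
\[
w^T A'' w + 2b(x)^T w + c(x),
\]
with $A''$ the bottom-right $(k-n-i) \times (k-n-i)$ sub-block of $A$, $b(x)_\ell := \sum_{a \leq n+i} A_{a,\, n+i+\ell}\, m_a(x)$, and $c(x) := \sum_{a,b \leq n+i} A_{ab}\, m_a(x) m_b(x)$. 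Non-negativity of $q_A$ on $V_i(\R)$ forces this quadratic to be $\geq 0$ for every real $w$, while $\mathcal{G}(A) = 0$ forces its value at $w = m''(x) := (m_{n+i+1}(x), \ldots, m_k(x))$ to equal $f_A(x) = 0$. Hence $m''(x)$ is a global minimizer, and the first-order condition yields the polynomial identity $A''\, m''(x) + b(x) = 0$ in $x$.

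The key observation is now that the two sides of this identity are supported on disjoint monomials: the entries of $A''m''(x)$ lie in the span of $\{m_j(x)\}_{j > n+i}$ while the entries of $b(x)$ lie in the span of $\{m_j(x)\}_{j \leq n+i}$. The linear independence of $m_0(x), \ldots, m_k(x)$ therefore forces $A'' = 0$ and $A_{a,\, n+i+\ell} = 0$ for all $a \leq n+i$; by symmetry of $A$, the transposed off-diagonal block vanishes as well. With this block structure, $q_A$ restricted to $H_i$ reduces to $c(x)$, and the remaining condition $c(x) = 0$ is a finite system of linear equations on the top-left block of $A$. Conversely, any $A$ satisfying these linear constraints has $q_A$ depending only on $z_0, \ldots, z_{n+i}$ and vanishing on $H_i$, hence on $V_i$ by Zariski density, so that $-A \in \widetilde{C}_i$ as well; the intersection is thus closed under negation, i.e.\ is a linear subspace. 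The main obstacle is the extraction of the polynomial identity $A''\, m''(x) + b(x) = 0$; the monomial-disjointness argument is a short verification once this identity is in hand.
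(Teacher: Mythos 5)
Your proof is correct and reduces, as the paper's does, to showing that the cone $\widetilde C_i=\{A\in\mathrm{Sym}_{k+1}(\R): q_A|_{V_i(\R)}\geq 0\}$ meets $\ker(\mathcal G)$ in a linear subspace, but the two stages are handled differently and arguably more cleanly in your version. For the closedness criterion itself, the paper establishes that $\overline{\mathcal G}$ is a closed map via the open mapping theorem and a quotient construction and then invokes Dieudonné's theorem to show $\mathfrak W+\ker(\mathcal G)$ is closed, whereas you go directly to the Rockafellar recession-cone criterion (Theorem~9.1 of \emph{Convex Analysis}); these are two packagings of essentially the same convex-analysis fact, and your invocation is correct. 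The more substantive difference is in the key lemma: the paper identifies the vanishing block of any $A\in\widetilde C_i\cap\ker(\mathcal G)$ column by column, iterating the ``a PSD quadratic in a single variable $Y$ with no $Y^2$-term has no $Y$-term'' argument for $t=k,k-1,\ldots,n+i+1$. You replace this iteration by a one-shot argument: treat the last $k-n-i$ coordinates as a vector parameter $w$, observe that $Q_x(w)=w^TA''w+2b(x)^Tw+c(x)$ is nonnegative with a zero at $w=m''(x)$, extract the gradient identity $A''m''(x)+b(x)=0$, and then kill both sides at once by the disjointness of their monomial supports. This block version is shorter, avoids the induction, and also bypasses the paper's detour through $\phi(K_i)(\R)$ by working with $V_i(\R)$ throughout. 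The converse inclusion (that the block structure plus $\ker(\mathcal G)$ forces $q_A$ to vanish identically on $H_i$, hence on $V_i$ by Zariski density, so the intersection is closed under negation) is correctly observed and matches the paper's subclaim (ii).
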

	\begin{proof} The Gram map is a linear map between the finite-dimensional normed $\R$-vector spaces $\mathrm{Sym}_{k+1}(\R)$ and $\mathcal{F}_{n+1,2d}$. Therefore, $\mathcal{G}$ is continuous and bounded. The normed $\R$-vector spaces $\mbox{Sym}_{k+1}(\R)$ and $\mathcal{F}_{n+1,2d}$ are furthermore Banach spaces and thus $\mathcal{G}$ is also open by the open mapping theorem (cf.\ \cite[Theorem 6.3]{Ovchinnikov}). We now consider the well-defined bijective linear map \begin{eqnarray*} \overline{\mathcal{G}}\colon \quotient{\mbox{Sym}_{k+1}(\R)}{\ker(\mathcal{G})}& \to & \mathcal{F}_{n+1,2d} \\ \ [A] & \mapsto & \overline{\mathcal{G}}([A]):=\mathcal{G}(A),\end{eqnarray*} where $[A]$ denotes the equivalence class of $A\in\mathrm{Sym}_{k+1}(\R)$ w.r.t.\ $\sim_{\ker(\mathcal{G})}$.
	%
	%
	%
	%
	%
	%
	Moreover, we interpret the quotient space $\quotient{\mbox{Sym}_{k+1}(\R)}{\ker(\mathcal{G})}$ as the topological space that is obtained by endowing the quotient set $\quotient{\mbox{Sym}_{k+1}(\R)}{\ker(\mathcal{G})}$ with the quotient topology induced by the normed $\R$-vector spaces $\mbox{Sym}_{k+1}(\R)$. Hence, we deduce that $\overline{\mathcal{G}}$ is open since $\mathcal{G}$ is open. 
	%
	Therefore, $\overline{\mathcal{G}}$ is a bijective open map between two topological spaces and thus closed. 
	
	Keeping this in mind, we interpret $\mathcal{F}_{k+1,2}$ as a metric space by endowing $\mathcal{F}_{k+1,2}$ with the metric induced by the norm of $\mathcal{F}_{k+1,2}$. Moreover, we define $W:=\phi(K_i)\subseteq\PP^k$ and consider the closed set
	$$\mbox{PSD}(W):=\left\{q\in\mathcal{F}_{k+1,2}\mid q\vert_{_{W(\R)}}\geq 0\right\}\subseteq \mathcal{F}_{k+1,2}.$$
	%
	The map
	\begin{eqnarray*} Q\colon\mathrm{Sym}_{k+1}(\R) &\to&\mathcal{F}_{k+1,2} \\
				A 							&\mapsto&q_A(Z):=ZAZ^t
	\end{eqnarray*} is linear and hence continuous. 
 	Thus, $\mathfrak{W}:=Q^{-1}(\mbox{PSD}(W))\subseteq\mathrm{Sym}_{k+1}(\R)$ is closed. Moreover, we consider the well-defined set $$\mathcal{W}:=\left\{[A]\in\quotient{\mbox{Sym}_{k+1}(\R)}{\ker(\mathcal{G})}\mid \exists B\in\ker(\mathcal{G})\colon A+B\in \mathfrak{W}\right\}$$ 
	which is closed if and only if $\left\{A\in\mbox{Sym}_{k+1}(\R)\mid [A]\in \mathcal{W}\right\}\subseteq\mathrm{Sym}_{k+1}(\R)$ is closed. Hence, in order to show that $\mathcal{W}$ is closed, it suffices to prove that $\left\{A\in\mbox{Sym}_{k+1}(\R)\mid [A]\in \mathcal{W}\right\}\subseteq\mathrm{Sym}_{k+1}(\R)$ is closed. We do so in two steps. \\
	
	\noindent \underline{Claim 1}: $\{A\in\mbox{Sym}_{k+1}(\R)\mid [A]\in \mathcal{W}\}=\mathfrak{W}+\ker(\mathcal{G})$. \\
	\noindent \textit{Proof.} ($\subseteq$) For $A\in\mbox{Sym}_{k+1}(\R)$ with $[A]\in \mathcal{W}$, we let $B\in\ker(\mathcal{G})$ be such that $C:=A+B\in\mathfrak{W}$ and conclude $A=C-B\in \mathfrak{W}-\ker(\mathcal{G})=\mathfrak{W}+\ker(\mathcal{G})$. \\
	($\supseteq$) For $A\in\mathfrak{W}$, $B\in\ker(\mathcal{G})$, we observe $[A+B]=[A]+[B]=[A]\in\mathcal{W}$. This proves Claim 1. \\
	
	\noindent \underline{Claim 2}: $\mathfrak{W}+\ker(\mathcal{G})$ is closed. \\
	\noindent \textit{Proof.} We observe that $\mathfrak{W}$ and $\ker(\mathcal{G})$ are non-empty cones in $\mathrm{Sym}_{k+1}(\R)$ and
	%
	recall that $\mathrm{Sym}_{k+1}(\R)$ is a locally convex Hausdorff space that is locally compact. Since we know that $\mathfrak{W}$ and $\ker(\mathcal{G})\subseteq\mathrm{Sym}_{k+1}(\R)$ are closed, we thus deduce that $\mathfrak{W}$ and $\ker(\mathcal{G})$ are locally compact. Moreover, the recession cones of the cones $\mathfrak{W}$ and $\ker(\mathcal{G})$ coincide with $\mathfrak{W}$ and $\ker(\mathcal{G})$, respectively. By an application of Dieudonné's theorem (refer to \cite[Theorem 1.1.8]{Zaline} for an interpretation of Dieudonné's original results \cite{Dieu}), it thus suffices to show that $\mathfrak{W}\cap\ker(\mathcal{G})$ is a linear space. To this end, we claim the following:
	\medskip
	\begin{enumerate}
		\item[(i)] If $i=k-n$, then $\mathfrak{W}\cap\ker(\mathcal{G})=\ker(\mathcal{G})$.
		\item[(ii)] \vspace*{0.1cm} If $i<k-n$, then $\mathfrak{W}\cap\ker(\mathcal{G})$ coincides with the set
		$$\hspace*{1.2cm} \left\{A:=(a_{s,t})_{0\leq s,t\leq k}\in \ker(\mathcal{G}) \mid \forall t\geq n+i+1 \colon a_{s,t}=0 \mbox{ for } s=0,\ldots,t \right\}.$$
	\end{enumerate}
	\medskip
	For subclaim (i), it suffices to show $\ker(\mathcal{G})\subseteq\mathfrak{W}$ and we know $$\mathfrak{W}=\left\{A\in\mathrm{Sym}_{k+1}(\R) \mid q_A\vert_{_{W(\R)}}\geq 0\right\}=\left\{A\in\mathrm{Sym}_{k+1}(\R) \mid q_A\vert_{_{V_{k-n}(\R)}}\geq 0\right\}$$ by construction since $V_{k-n}$ is the Zariski closure of $W=\phi(K_{k-n})$. Moreover, $V_{k-n}=V(\PP^n)$ and, therefore, we see $$\ker(\mathcal{G})=\{A\in\mathrm{Sym}_{k+1}(\R) \mid q_A \mathrm{\ vanishes\ on\ } V(\PP^n)\}\subseteq \mathfrak{W}.$$ This proves subclaim (i). \\ 
	
	\noindent For subclaim (ii), we have to verify two inclusions. 
	
	To show ($\subseteq $), we let $A:=(a_{s,t})_{0\leq s,t\leq k}\in\mathfrak{W}\cap\ker(\mathcal{G})$ be arbitrary but fixed and observe that $a_{k,k}$ is the coefficient of $X_n^{2d}$ in $\mathcal{G}(A)$. Hence, $a_{k,k}=0$ follows from $A\in\ker(\mathcal{G})$. Moreover, since $[m_0(1,\boldsymbol{x}):\ldots:m_{k-1}(1,\boldsymbol{x}):y]\in \phi(K_{k-n-1})(\R) \subseteq \phi(K_i)(\R)$ for any $\boldsymbol{x}\in\R^{n}$ and any non-zero $y\in\R$, we deduce from $A\in\mathfrak{W}$ that
	\enlargethispage{\baselineskip}
	\begin{eqnarray*}
		q(\boldsymbol{X},Y)&:=&q_A(m_0(1,\boldsymbol{X}),\ldots,m_{k-1}(1,\boldsymbol{X}),Y) \\
		&=&q_A(m_0(1,\boldsymbol{X}),\ldots,m_{k-1}(1,\boldsymbol{X}),m_k(1,\boldsymbol{X}))\\
		&& -2\sum\limits_{s=0}^{k-1} a_{s,k}m_s(1,\boldsymbol{X})m_{k}(1,\boldsymbol{X})+\left(2\sum\limits_{s=0}^{k-1} a_{s,k}m_s(1,\boldsymbol{X})\right)Y
	\end{eqnarray*}
	is PSD. Consequently, $a_{i,k}=0$ for $i=0,\ldots,k-1$ in order to avoid the potential linearity of $q$ in $Y$. If $i=k-n-1$, then we are done. Otherwise, we iterate the above argument for $t=k-1,\ldots,n+i+1$ and conclude $a_{s,t}=0$ for $t=n+i+1,\ldots,k-n$ and $s=0,\ldots,t$. 
	
	To show ($\supseteq$), for $A:=(a_{s,t})_{0\leq s,t\leq k}\in \ker(\mathcal{G})$ such that $a_{s,t}=0$ for $t=n+i+1,\ldots,k$ and $s=0,\ldots,t$, we observe for $\boldsymbol{x}\in\R^n$ and $z_{n+i+1},\ldots,z_k\in\R$ that 
	\begin{eqnarray*}
		q_A(m_0(1,\boldsymbol{x}),\ldots,m_{n+i}(1,\boldsymbol{x}),z_{n+i+1},\ldots,z_k)&=&q_A(m_0(1,\boldsymbol{x}),\ldots,m_k(1,\boldsymbol{x}))\\
		&=&\mathcal{G}(A)(1,\boldsymbol{x})= 0.
	\end{eqnarray*} Hence, $A\in\mathfrak{W}$ since $W=\phi(K_i)$. This proves subclaim (ii). \\
	
	We thus see that if $i=k-n$, then $\mathfrak{W}\cap\ker(\mathcal{G})=\ker(\mathcal{G})$ is a linear space. Otherwise, if $i<k-n$, then we compute for $\lambda\in\R$, $A:=(a_{s,t})_{0\leq s,t\leq k}$ and $B:=(b_{s,t})_{0\leq s,t\leq k}\in \ker(\mathcal{G})$ such that $a_{s,t}=b_{s,t}=0$ for $t=n+i+1,\ldots,k$ and $s=0,\ldots,t$ that $$(A+\lambda B)_{s,t}=a_{s,t}+\lambda b_{s,t}=0+\lambda \cdot 0 = 0$$ for $t=n+i+1,\ldots,k$ and $s=0,\ldots,t$. Hence, by (ii), we deduce that $\mathfrak{W}\cap\ker(\mathcal{G})$ also is a linear space if $i<k-n$. This completes our proof of Claim 2. \\
	
	To conclude the proof, we lastly oberserve $C_i=\overline{\mathcal{G}}(\mathcal{W})$ 
	which yields that $C_i$ is closed as the image of the closed set $\mathcal{W}$ under the closed map $\overline{\mathcal{G}}$.
\end{proof}

\begin{cor} For $i=0,\ldots,k-n$, $C_i$ is the convex hull of its extreme rays.
\end{cor}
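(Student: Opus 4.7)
The statement is the standard Minkowski--Krein--Milman characterisation of a closed pointed convex cone in a finite-dimensional real vector space, specialised to $C_i$. The plan is therefore to verify the two hypotheses -- closedness and pointedness -- and then invoke the classical theorem; no new ideas beyond those already in place are required.

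First I would record that $C_i$ sits inside the finite-dimensional $\R$-vector space $\mathcal{F}_{n+1,2d}$ and is closed by \Cref{prop:Ciclosed}. Pointedness is inherited from $\mathcal{P}_{n+1,2d}$: since $C_i\subseteq \mathcal{P}_{n+1,2d}$ and $\mathcal{P}_{n+1,2d}$ contains no straight line, neither does $C_i$. In fact, this has already been noted in the introduction, where it is observed that every intermediate cone $C_W$ with $V(\PP^n)\subseteq W$ is pointed and full-dimensional.

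Next I would produce a compact base for $C_i$. Because $C_i$ is a closed, pointed, full-dimensional cone in finite dimension, its dual cone has non-empty interior; picking any $\ell$ from the interior of the dual gives a linear functional on $\mathcal{F}_{n+1,2d}$ that is strictly positive on $C_i\setminus\{0\}$. The slice $B:=C_i\cap \ell^{-1}(1)$ is then a non-empty compact convex subset of an affine hyperplane, and a ray $\R_{\geq 0}\cdot f\subseteq C_i$ is extreme in $C_i$ precisely when its unique intersection point with $\ell^{-1}(1)$ is an extreme point of $B$; this sets up a bijection between $\mathrm{ext}(B)$ and the extreme rays of $C_i$.

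Finally, Minkowski's theorem (i.e.\ Krein--Milman in finite dimension) yields $B=\mathrm{conv}(\mathrm{ext}(B))$, and taking non-negative scalar multiples of both sides recovers $C_i$ as the convex hull of its extreme rays. I do not anticipate any substantive obstacle: the delicate work has already been carried out in \Cref{prop:Ciclosed}, and everything that remains is a textbook application of Krein--Milman together with the separation-theoretic existence of a strictly positive functional on a pointed closed cone in finite dimension.
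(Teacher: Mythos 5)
Your proposal is correct and takes essentially the same route as the paper, whose proof simply combines \Cref{prop:Ciclosed} with the Krein--Milman theorem applied to the closed convex cone $C_i$ containing no straight line. Your explicit construction of a compact base via a strictly positive functional merely spells out the standard finite-dimensional (Minkowski) reduction that the paper leaves implicit.
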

	\begin{proof} \Cref{prop:Ciclosed} and Krein--Milman's theorem (cf.\ \cite{KM}) together yield the assertion since $C_i$ is a closed convex cone that contains no straight line.
	\end{proof}

\subsection{Interior and Boundary}
In order to describe the interior of $C_i$, we need to introduce a non-negativity notion that is stronger than the (local) positive semidefinite property of a given quadratic form. That is, we call $q\in\mathcal{F}_{k+1,2}$ \textit{(locally) positive} or \textit{(locally) positive definite} on $W(\R)\subseteq\PP^k$, if $q(z)>0$ for all $[z]\in W(\R)$ and write $q\vert_{_{W(\R)}}>0$. In the special case that $W(\R)=\PP^k(\R)$, we say that $q$ is \textit{(globally) positive} or \textit{(globally) positive definite (PD)} and write $q>0$.

\begin{thm} \label{prop:interiorCi} For $i=0,\ldots,k-n$, the interior of $C_i$ is given by $$\accentset{\circ}{C_i}=\left\{f\in\mathcal{F}_{n+1,2d} \mid \exists A\in\mathcal{G}^{-1}(f) \colon q_A\vert_{_{V_i(\R)}}>0\right\}.$$
\end{thm}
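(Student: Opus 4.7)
The plan is to establish both inclusions of the asserted equality. Write
$$S := \left\{f\in\mathcal{F}_{n+1,2d} \mid \exists A\in\mathcal{G}^{-1}(f) \colon q_A\vert_{_{V_i(\R)}}>0\right\}$$
for the right-hand side.

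For the inclusion $S \subseteq \accentset{\circ}{C_i}$, I would fix $f \in S$ with a witness $A \in \mathcal{G}^{-1}(f)$ satisfying $q_A\vert_{V_i(\R)} > 0$. The key step is to promote this pointwise positivity to a uniform positive lower bound: lifting $V_i(\R) \subseteq \PP^k(\R)$ to the unit sphere in $\R^{k+1}$ yields a compact set $\widetilde V_i$ on which $q_A$ is continuous and strictly positive, hence bounded below by some $\delta > 0$. Since the linear map $A' \mapsto q_{A'}$ between the finite-dimensional normed spaces $\mathrm{Sym}_{k+1}(\R)$ and $\mathcal{F}_{k+1,2}$ is continuous, there is a neighborhood $U$ of $A$ with $q_{A'}\vert_{\widetilde V_i} > \delta/2$ for all $A' \in U$. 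Invoking the openness of $\mathcal{G}$ (established in the proof of \Cref{prop:Ciclosed} via the open mapping theorem), $\mathcal{G}(U)$ is an open neighborhood of $f$ in $\mathcal{F}_{n+1,2d}$. For every $g \in \mathcal{G}(U)$ one picks a preimage $A' \in U \cap \mathcal{G}^{-1}(g)$ and concludes $q_{A'}\vert_{V_i(\R)} > 0$, so $g \in C_i$. Hence $\mathcal{G}(U) \subseteq C_i$ and $f \in \accentset{\circ}{C_i}$.

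For the reverse inclusion $\accentset{\circ}{C_i} \subseteq S$, I would use a perturbation trick with the identity Gram matrix $I \in \mathrm{Sym}_{k+1}(\R)$. The associated quadratic form $q_I(Z) = \sum_{j=0}^k Z_j^2$ is strictly positive on all of $\PP^k(\R)$, and $p := \mathcal{G}(I) = \sum_{j=0}^k m_j(X)^2$ lies in $\Sigma_{n+1,2d} \subseteq C_i$. Given $f \in \accentset{\circ}{C_i}$, the definition of interior supplies some $\epsilon > 0$ with $f - \epsilon p \in C_i$; thus there is $A \in \mathcal{G}^{-1}(f - \epsilon p)$ with $q_A\vert_{V_i(\R)} \geq 0$. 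By linearity of $\mathcal{G}$, one has $A + \epsilon I \in \mathcal{G}^{-1}(f)$, and $q_{A + \epsilon I} = q_A + \epsilon q_I$ is strictly positive on $V_i(\R)$ (non-negative summand plus strictly positive summand). Therefore $f \in S$.

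The only genuinely delicate ingredient is in the first inclusion: converting pointwise positivity on the projective variety $V_i(\R)$ into a uniform positive lower bound requires compactness (achieved by the sphere lift), and transferring an open neighborhood in $\mathrm{Sym}_{k+1}(\R)$ to one in $\mathcal{F}_{n+1,2d}$ requires the openness of $\mathcal{G}$. Both are already in hand from \Cref{prop:Ciclosed}, so the remaining steps are routine. The reverse inclusion is a standard convex perturbation argument once the globally PD witness $\sum_j m_j^2$ is selected.
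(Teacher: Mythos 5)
Your proposal is correct, and while it uses the same two essential ingredients as the paper --- a compactness argument over the unit-sphere lift of $V_i(\R)$ to pass from strict positivity to interior membership, and a perturbation by a multiple of the identity Gram matrix $I_{k+1}$ to pass from interior membership to strict positivity --- it organizes them differently. The paper works entirely on the matrix side: it writes $C_i=\mathcal{G}(\mathfrak{W})$ with $\mathfrak{W}=\{A\in\mathrm{Sym}_{k+1}(\R)\mid q_A\vert_{_{V_i(\R)}}\geq 0\}$, asserts $\accentset{\circ}{C_i}=\mathcal{G}(\accentset{\circ}{\mathfrak{W}})$ by appealing to $\mathcal{G}$ being a homeomorphism, and then characterizes $\accentset{\circ}{\mathfrak{W}}$ via the perturbation $A-\lambda I_{k+1}$ (one inclusion) and a minimum of $q_A$ on $\mathcal{B}\cap W$ with respect to the sup-norm on $\mathrm{Sym}_{k+1}(\R)$ (the other). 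You instead prove both inclusions directly at the level of forms: the openness of $\mathcal{G}$ (already available from the proof of \Cref{prop:Ciclosed}) turns your matrix neighborhood $U$ into a neighborhood $\mathcal{G}(U)\subseteq C_i$ of $f$, and the convex perturbation $f-\epsilon\,\mathcal{G}(I_{k+1})$ handles the converse. This is a genuine gain in rigor: $\mathcal{G}$ is not injective (its kernel is nontrivial), so it is not literally a homeomorphism, and while openness of $\mathcal{G}$ gives $\mathcal{G}(\accentset{\circ}{\mathfrak{W}})\subseteq\accentset{\circ}{C_i}$ for free, the reverse containment $\accentset{\circ}{C_i}\subseteq\mathcal{G}(\accentset{\circ}{\mathfrak{W}})$ needs exactly the kind of $\epsilon$-perturbation on the form side that you supply; your version therefore makes explicit a step the paper glosses over. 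Two minor points you leave implicit are both routine and consistent with the paper: compactness of the sphere lift requires closedness of the affine cone $\{z\in\R^{k+1}\mid [z]\in V_i(\R)\}$, which follows because it is the real zero set of the defining polynomials of $V_i$ (the paper proves this in the same way), and positivity of $q_{A'}$ on unit-norm representatives extends to all of $V_i(\R)$ by homogeneity of quadratic forms.
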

\begin{proof} At the beginning of the proof of \Cref{prop:Ciclosed}, we observed that $\mathcal{G}$ is a homeomorphism between $\mathrm{Sym}_{k+1}(\R)$ and $\mathcal{F}_{n+1,2d}$. 
Thus, $C_i=\mathcal{G}(\mathfrak{W})$ for $$\mathfrak{W}:=\left\{A\in\mathrm{Sym}_{k+1} \mid q_A\vert_{_{V_i(\R)}}\geq 0\right\}.$$ Hence, $\accentset{\circ}{C_i}=(\accentset{\circ}{\mathcal{G}(\mathfrak{W})})=\mathcal{G}(\accentset{\circ}{\mathfrak{W}})$. Therefore, it suffices to show $$\accentset{\circ}{\mathfrak{W}}=\{A\in\mathrm{Sym}_{k+1}(\R) \mid q_A\vert_{_{V_i(\R)}}>0\}.$$
	
	\noindent ($\subseteq$) For $A\in\accentset{\circ}{\mathfrak{W}}$, there exists some $\varepsilon>0$ such that the open ball $B_{\varepsilon}(A)$ of radius $\varepsilon$ with center $A$ in $\mathrm{Sym}_{k+1}(\R)$ is contained in $\mathfrak{W}$. We fix such $\varepsilon>0$ and let $\lambda>0$ be sufficiently small such that $\vert \vert -\lambda I_{k+1} \vert \vert = \lambda \vert \vert I_{k+1} \vert \vert <\varepsilon$, where $I_{k+1}$ denotes the $(k+1)\times(k+1)$ identity matrix. Hence,
	$\vert \vert (A-\lambda I_{k+1})-A \vert \vert <\varepsilon$ implies $A-\lambda I_{k+1}\in B_{\varepsilon}(A)\subseteq\mathfrak{W}$ which yields that $$q_{A-\lambda I_{k+1}}(Z)=q_A(Z)-\lambda q_{I_{k+1}}(Z)=q_A(Z)-\lambda\sum\limits_{j=0}^k Z_j^2$$ is locally PSD on $V_i(\R)$. Since $\lambda\sum\limits_{j=0}^k z_j^2>0$ for any $[z]\in V_i(\R)$, we thus conclude $q_A(z)>0$ for any $[z]\in V_i(\R)$. \\
	
	\noindent ($\supseteq$) We set $\mathcal{B}$ to be the closed unit ball in $\R^{k+1}$ and recall that $\mathcal{B}$ is especially compact. Moreover, we specify the norm on $\mathrm{Sym}_{k+1}(\R)$ to be given by
	\begin{eqnarray*} \vert \vert \cdot \vert \vert \colon \mathrm{Sym}_{k+1}(\R) & \to & \R_{\geq 0} \\ A &\mapsto & \max\limits_{z\in \mathcal{B}}\vert q_A(z)\vert
	\end{eqnarray*}
	and interpret $V_i(\R)$ as an affine set by setting $W:=\{z\in\R^{k+1} \mid [z]\in V_{i}(\R)\}$. 
	
	We claim that $W$ is closed.
	Indeed, 
	let $\left(z^{(n)}\right)_{n\in\N}\subseteq W$ be a converging sequence with limit $z\in\R^{k+1}$ and choose $S\subseteq \C[Z]$ such that $V_i=\mathcal{V}(S)$. For $n\in\N$, $z^{(n)}\in W$ implies $\left[z^{(n)}\right]\in V_i(\R)=\mathcal{V}(S)(\R)$ and thus $f\left(z^{(n)}\right)=0$ for any $f\in S$ and any $n\in\N$. Therefore, $$f(z)=f\left(\lim\limits_{n\to \infty} z^{(n)}\right)=\lim\limits_{n\to\infty} f\left(z^{(n)}\right)=0$$ follows for any $f\in S$. Hence, $[z]\in\mathcal{V}(S)(\R)=V_i(\R)$, respectively $z\in W$. 
	
	Consequently, the intersection $\mathcal{B}\cap W$ of the compact set $\mathcal{B}$ with the closed set $W$ is compact. Hence, we deduce for arbitrary but fixed $A\in\mathrm{Sym}_{k+1}(\R)$ with $q_A\vert_{_{V_i(\R)}}>0$ that $q_A$ attains a minimum on $\mathcal{B}\cap W$. This allows us to fix some $\varepsilon>0$ such that $$\min\limits_{z\in \mathcal{B}\cap W} q_A(z)>\varepsilon > 0$$ and we let $B_{\varepsilon}(A)$ be the open ball of radius $\varepsilon$ with center $A$ in $\mathrm{Sym}_{k+1}(\R)$.
	
	Lastly, we claim that $B_{\varepsilon}(A)\subseteq \mathfrak{W}$. Indeed, for $B\in B_{\varepsilon}(A)$, it holds $\varepsilon > \vert \vert B-A \vert \vert = \max\limits_{z\in\mathcal{B}}\vert q_{B-A}(z)\vert$ and we have to show that $q_B$ is locally PSD on $V_i(\R)$. To this end, we let $[z]\in V_i(\R)$ be arbitrary but fixed and deduce $z\in W$. Moreover, we choose a sufficiently small $\lambda>0$ such that $\vert \lambda z \vert \leq 1$ and observe $\lambda z \in \mathcal{B}\cap W$. 
	Recalling $\min\limits_{y\in \mathcal{B}\cap W} q_A(y)>\varepsilon > 0$, we thus conclude
	\begin{eqnarray*} q_B(\lambda z)&=&q_{A+(B-A)}(\lambda z) \\
		&=&q_A(\lambda z)+q_{B-A}(\lambda z) \\
		&\geq&\min\limits_{y\in \mathcal{B}\cap W} q_A(y)+q_{B-A}(\lambda z) \\
		&>&\varepsilon+q_{B-A}(\lambda z) \\
		&=&\varepsilon-(-q_{B-A}(\lambda z)).
	\end{eqnarray*} 
	If $q_B(\lambda z)<0$, then the above implies $-q_{B-A}(\lambda z)>\varepsilon>0$. Thus, $$\vert \vert B-A \vert \vert =\max\limits_{z\in \mathcal{B}}\vert q_{B-A}(z)\vert=\max\limits_{z\in \mathcal{B}}\vert -q_{B-A}(z)\vert\geq -q_{B-A}(\lambda z) > \varepsilon$$ which contradicts $B\in B_{\varepsilon}(A)$. Hence, $q_B(\lambda z)<0$ cannot be true and it follows $0\leq q_B(\lambda z)=\lambda^2q_B(z)$. Thus, $q_B(z)>0$.
	
	Altogether, we therefore showed that any $A\in\mathrm{Sym}_{k+1}(\R)$ such that $q_A\vert_{_{V_i(\R)}}>0$ lies in the interior of $\mathfrak{W}$.
\end{proof}

\begin{exmp} \textsc{The Interior of $\mathcal{P}_{n+1,2d}$} \\
	It is known (cf.\ \cite[Theorem 3.14]{Rez2}) that 
	$$\accentset{\circ}{\mathcal{P}}_{n+1,2d}=\left\{f\in\mathcal{F}_{n+1,2d} \mid \forall x\in\R^{n+1}, x\neq 0\colon f(x)>0\right\}.$$ Since $V_{k-n}(\R)=V(\PP^n(\R))$, from \Cref{prop:interiorCi} above, we also have 
	 $$\accentset{\circ}{\mathcal{P}}_{n+1,2d}=\accentset{\circ}{C}_{k-n}=\left\{f\in\mathcal{F}_{n+1,2d} \mid \exists A\in\mathcal{G}^{-1}(f) \colon q_A\vert_{_{V(\PP^n(\R))}}>0\right\}.$$ Let us show that these two descriptions of $\accentset{\circ}{\mathcal{P}}_{n+1,2d}$ are indeed equivalent.
	
	Let $f\in\mathcal{F}_{n+1,2d}$ be such that $f(x)>0$ for all non-zero $x\in\R^{n+1}$ and fix $A\in\mathcal{G}^{-1}(f)$. Hence, we see $q_A(m_0(x),\ldots,m_k(x))=f(x)>0$ for all non-zero $x\in\R^{n+1}$ which shows that $q_A$ is locally PD on $V((\PP^n(\R))$.
	
	Vice versa, let $f\in\mathcal{F}_{n+1,2d}$ be such that there exists some $A\in\mathcal{G}^{-1}(f)$ for which $q_A$ is locally PD on $V(\PP^n(\R))$. For such a choice of $A\in\mathcal{G}^{-1}(f)$, we conclude $f(x)=q_A(m_0(x),\ldots,m_k(x))>0$ for any non-zero $x\in\R^{n+1}$ since we know that $[m_0(x):\ldots:m_k(x)]=V([x])\in V(\PP^n(\R))$ for any non-zero $x\in\R^{n+1}$.  \qed
\end{exmp}

\begin{exmp} \textsc{The Interior of $\Sigma_{n+1,2d}$} \\
	It is known (cf.\ \cite[Proposition 5.5.]{CLR}) that 
	$$\accentset{\circ}{\Sigma}_{n+1,2d}=\left\{f\in\mathcal{F}_{n+1,2d} \mid \exists A\in\mathcal{G}^{-1}(f) \colon A \mbox{ is non-singular and } q_A\geq 0\right\}.$$ Since $V_{0}(\R)=\PP^k(\R)$, from \Cref{prop:interiorCi} above, we also have 
	$$\accentset{\circ}{\Sigma}_{n+1,2d}=\accentset{\circ}{C}_{0}=\left\{f\in\mathcal{F}_{n+1,2d} \mid \exists A\in\mathcal{G}^{-1}(f) \colon q_A>0\right\}.$$ Let us show that these two descriptions of $\accentset{\circ}{\Sigma}_{n+1,2d}$ are indeed equivalent.
	
	Let $f\in\mathcal{F}_{n+1,2d}$ be such that there exists some non-singular $A\in\mathcal{G}^{-1}(f)$ for which $q_A$ is PSD. For such a choice of $A\in\mathcal{G}^{-1}(f)$, we deduce that all eigenvalues of $A$ are non-negative real numbers since $q_A\geq 0$ implies that the matrix $A\in\mathrm{Sym}_{k+1}(\R)$ is positive semidefinite. Moreover, $0$ is not an eigenvalue of $A$ since $A$ is assumed to be non-singular. Altogether, we thus know that any eigenvalue of $A$ is a positive real number. Therefore, the matrix $A$ is positive definite which implies that $q_A$ is PD.
	
	Vice versa, let $f\in\mathcal{F}_{n+1,2d}$ be such that there exists some $A\in\mathcal{G}^{-1}(f)$ for which $q_A$ is PD. For such a choice of $A\in\mathcal{G}^{-1}(f)$, we especially know that $q_A$ is PSD and we moreover deduce that the matrix $A\in\mathrm{Sym}_{k+1}(\R)$ is positive definite since $q_A>0$. Therefore, we know that any eigenvalue of $A$ is a positive real number and thus especially non-zero. We conclude that $A$ is non-singular. \qed
\end{exmp}

\begin{cor} \label{cor:Ciboundary} For $i=0,\ldots,k-n$, the boundary of $C_i$ is given by $$\partial C_i=\left\{f\in C_i \mid \forall A\in\mathcal{G}^{-1}(f)\ \exists [z]\in V_i(\R) \colon q_B(z)\leq 0\right\}.$$
\end{cor}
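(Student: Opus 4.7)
The plan is to derive this corollary directly from the two preceding results, \Cref{prop:Ciclosed} and \Cref{prop:interiorCi}, by using the basic topological identity $\partial C_i = \overline{C_i}\setminus\accentset{\circ}{C_i}$. Since \Cref{prop:Ciclosed} gives $\overline{C_i}=C_i$, this simplifies to $\partial C_i = C_i \setminus \accentset{\circ}{C_i}$, so the work reduces to negating the membership condition for the interior.

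First, I would recall from \Cref{prop:interiorCi} that $f\in\accentset{\circ}{C_i}$ if and only if there exists $A\in\mathcal{G}^{-1}(f)$ with $q_A\vert_{V_i(\R)}>0$, meaning $q_A(z)>0$ for every $[z]\in V_i(\R)$. Negating this quantified statement, $f\notin\accentset{\circ}{C_i}$ precisely when, for every $A\in\mathcal{G}^{-1}(f)$, the restriction $q_A\vert_{V_i(\R)}$ fails to be strictly positive, i.e., there exists at least one $[z]\in V_i(\R)$ such that $q_A(z)\leq 0$ (note the non-strict inequality arising from the negation of $>0$). Here the quantifier $A\in\mathcal{G}^{-1}(f)$ is well-defined because $\mathcal{G}$ is surjective, so $\mathcal{G}^{-1}(f)$ is non-empty.

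Combining these two observations yields exactly the stated description: an element $f\in C_i$ lies on the boundary $\partial C_i$ if and only if $\forall\,A\in\mathcal{G}^{-1}(f)\,\exists\,[z]\in V_i(\R)\colon q_A(z)\leq 0$. There is essentially no real obstacle in this argument, since both nontrivial ingredients — closedness of $C_i$ and the characterization of $\accentset{\circ}{C_i}$ — have already been established; the corollary is a logical rearrangement coupled with the identity $\partial C_i=C_i\setminus\accentset{\circ}{C_i}$ valid for any closed set.
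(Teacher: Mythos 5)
Your proposal is correct and matches the paper's own proof: the paper likewise uses \Cref{prop:Ciclosed} to get $\partial C_i=C_i\setminus\accentset{\circ}{C_i}$ and then applies \Cref{prop:interiorCi}, with your quantifier negation spelling out the "follows from" step (note the statement's $q_B$ is a typo for $q_A$, as you implicitly corrected).
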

\begin{proof} \Cref{prop:Ciclosed} states that $C_i$ is closed and thus $\partial C_i=C_i\backslash\accentset{\circ}{C_i}$. The assertion therefore follows from \Cref{prop:interiorCi}. 
\end{proof}


\section{Open Problems} \label{problems}
In this section, we pose some questions concerning membership tests for $C_i$'s, investigations of the dual cone $$C_i^\vee:=\{L\in Hom(\R[X],\R) \mid \forall f\in C_i\colon L(f)\geq 0 \}$$ of $C_i$ and generalizations of our considerations about $C_i$'s to cones along toric varieties. Membership tests are in particular crucial from optimization point of view for evaluating the computational value of $C_i$'s whereas the dual cones $C_i^\vee$'s are related to the moment problem from functional analysis.

\subsection{Membership Tests for $\bf{C_i}$'s} \label{Sec:ConeProp}

From an optimizational perspective $\Sigma_{n+1,2d}$ is more valuable than $\mathcal{P}_{n+1,2d}$ since the membership for $\Sigma_{n+1,2d}$ can be efficiently tested. This is because for $f\in\mathcal{F}_{n+1,2d}$, $f$ lies in $\Sigma_{n+1,2d}$ if and only if there exists an associated Gram matrix $A\in\mathcal{G}^{-1}(f)$ with a corresponding quadratic form $q_A$ that is locally PSD on $\PP^k(\R)$. The latter is equivalent to $A$ being positive semidefinite. It is well-understood when a given symmetric matrix with real entries is positive semidefinite. We recall some characterizations of this property below (cf.\ \cite[0.2.1 Proposition]{Marshall}).

\begin{prop} \label{prop:propPSDmatrix} \index{positive semidefinite matrix}For $l\in\N$, $A\in\mathrm{Sym}_l(\R)$, the following are equivalent:
	\begin{enumerate}
		\item[(i)] $A$ is positive semidefinite.
		\item[(ii)] Every eigenvalue of $A$ is non-negative.
		\item[(iii)] Each principal minor of $A$ is non-negative.
		\item[(iv)] There exists some $l\times l$ matrix $U$ with real entries such that $A=U^tU$.
		\item[(v)] There exist $s\in\N$, $y^{(1)},\ldots,y^{(s)}\in\R^l$ and $\lambda_1,\ldots,\lambda_s\geq 0$ such that $$A=\sum\limits_{i=1}^s\lambda_i\left(y^{(i)}\right)^t\left(y^{(i)}\right).$$
	\end{enumerate}
\end{prop}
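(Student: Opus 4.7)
The plan is to prove the five equivalences via the cycle $(i) \Rightarrow (ii) \Rightarrow (iv) \Rightarrow (v) \Rightarrow (i)$, supplemented by the separate equivalence $(ii) \Leftrightarrow (iii)$. The central tool throughout is the spectral theorem for real symmetric matrices: any $A \in \mathrm{Sym}_l(\R)$ admits a decomposition $A = P^t D P$ with $P \in \R^{l\times l}$ orthogonal and $D = \diag(\lambda_1, \ldots, \lambda_l)$ collecting its (necessarily real) eigenvalues.

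For $(i) \Rightarrow (ii)$, given an eigenvector $v$ with $v^t v = 1$ and eigenvalue $\lambda$, the identity $\lambda = v^t A v$ and positive semidefiniteness of $A$ force $\lambda \geq 0$. For $(ii) \Rightarrow (iv)$, non-negativity of the eigenvalues permits the diagonal square root $D^{1/2} := \diag(\sqrt{\lambda_1}, \ldots, \sqrt{\lambda_l})$, and setting $U := D^{1/2} P$ yields $U^t U = P^t D^{1/2} D^{1/2} P = P^t D P = A$. For $(iv) \Rightarrow (v)$, I denote the rows of $U$ by $y^{(1)}, \ldots, y^{(l)} \in \R^l$ and take $s = l$, $\lambda_i = 1$; a direct block computation of $U^t U$ gives $A = \sum_{i=1}^{l} (y^{(i)})^t y^{(i)}$. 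For $(v) \Rightarrow (i)$, the identity $x^t A x = \sum_{i=1}^s \lambda_i (y^{(i)} x)^2 \geq 0$ holds for every $x \in \R^l$.

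It remains to establish $(ii) \Leftrightarrow (iii)$, and this is where I expect the main subtlety. The direction $(i) \Rightarrow (iii)$ (which, combined with the cycle, yields $(ii) \Rightarrow (iii)$) is routine: any principal submatrix $A_S$ indexed by $S \subseteq \{1, \ldots, l\}$ is itself positive semidefinite by restricting the quadratic form $x \mapsto x^t A x$ to vectors supported on $S$, so $\det(A_S)$ is the product of the non-negative eigenvalues of $A_S$ and hence non-negative. The converse $(iii) \Rightarrow (ii)$ is the main obstacle. My plan is to expand the characteristic polynomial as
\[ \chi_A(t) = \det(tI_l - A) = \sum_{j=0}^{l} (-1)^{l-j} e_{l-j}(A)\, t^j, \]
where $e_k(A)$ denotes the sum of all $k\times k$ principal minors of $A$; under the hypothesis of $(iii)$ each $e_k(A)$ is non-negative, so a sign analysis of $\chi_A(-s)$ for $s > 0$ — in which every summand carries the sign $(-1)^l$ — rules out negative real roots of $\chi_A$, proving that every eigenvalue of $A$ is non-negative. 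The delicate point is that the coefficient $e_k(A)$ genuinely aggregates \emph{all} principal minors of size $k$ rather than merely the leading ones, which is exactly what allows the argument to extend past Sylvester's criterion into the (merely) positive semidefinite regime.
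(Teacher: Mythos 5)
Your proof is correct. Note, however, that the paper does not prove this proposition at all: it is recalled as a standard fact with a pointer to Marshall's book (0.2.1 Proposition there), so there is no in-paper argument to compare against. Your route --- the cycle $(i)\Rightarrow(ii)\Rightarrow(iv)\Rightarrow(v)\Rightarrow(i)$ via the spectral theorem, plus $(i)\Rightarrow(iii)$ by restricting the quadratic form and $(iii)\Rightarrow(ii)$ via the expansion $\det(tI_l-A)=\sum_{j=0}^{l}(-1)^{l-j}e_{l-j}(A)\,t^j$ --- is the standard textbook treatment and is sound; the only step worth making explicit in $(iii)\Rightarrow(ii)$ is that $e_0(A)=1$ (the empty principal minor), so that for $s>0$ the summand $s^l$ in $\chi_A(-s)=(-1)^l\sum_{j=0}^{l}e_{l-j}(A)\,s^j$ is strictly positive and hence $\chi_A(-s)\neq 0$, not merely of constant sign; combined with the reality of the eigenvalues of a symmetric matrix this rules out negative eigenvalues, exactly as you intend.
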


Thus, verification of whether a given $A\in\mathrm{Sym}_{k+1}(\R)$ is positive semidefinite by evaluating the corresponding $q_A$ in all $x\in\R^{k+1}$, as done in our setting, can be replaced by testing for any one of the \textit{intrinsic} properties (ii) -- (v) of \Cref{prop:propPSDmatrix}. Therefore, for the cones $C_0,\ldots,C_n$ and also $C_{n+1}$ if $n=2$, corresponing to the varieties of minimal degree $V_0,\ldots,V_n$ and $V_{n+1}$ if $n=2$, membership can be efficiently tested by the verification of any one of the intrinsic properties (ii) -- (v) for representing Gram matrices.

The downside of replacing membership tests to $\mathcal{P}_{n+1,2d}$ by membership tests to $\Sigma_{n+1,2d}$ is that not every PSD $(n+1)$-ary $2d$-ic is SOS and thus membership tests in $\Sigma_{n+1,2d}$ do not suffices to decide whether a given form is PSD in all cases. In an attempt to provide certificates of the PSD property for more $(n+1)$-ary $2d$-ics than those that are SOS, we therefore propose to test membership in the larger strictly separating intermediate cones $C_i$'s in (\ref{int:separation}). This brings us to the following task.

\begin{customqu}{1} \label{prop1} Let $C_i$ be a strictly separating intermediate cone in (\ref{int:separation}). Provide a membership test for $C_i$, preferably by an intrinsic property of a representing Gram matrix.
\end{customqu}

The \Cref{prop1} above has to be addressed with tools other than the ones coming from semidefinite programming since no strictly separating intermediate cone in (\ref{int:separation}) is a spectrahedral shadow by \cite[Theorem D]{GHK2} and with that not a feasible region of a semidefinite programming problem. However, there might be some other method which could provide numerically efficient membership tests for our distinguished intermediate cones $C_i$'s demonstrating their computational values.

\subsection{Dual Cones of $\bf{C_i}$'s}  \label{Sec:Dual}
The bidual cone $(C_i^\vee)^\vee$ coincides with $C_i$ for $i=0,\ldots,k-n$ since $C_0,\ldots,C_{k-n}$ are closed. Hence, an investigation of the cone filtration (\ref{ConeFiltration}) can equivalently be replaced by that of the dual cone filtration
$$\Sigma_{n+1,2d}^\vee \supseteq C_0^\vee \supseteq \ldots \supseteq C_{k-n}^\vee=\mathcal{P}_{n+1,2d}^\vee$$ since $C_i\subsetneq C_{i+1}$ if and only if $C_i^\vee \supsetneq C_{i+1}^\vee$ for $i=0,\ldots,k-n-1$. Let us relate this dual cone filtration to the 
\textit{truncated moment problem} from functional analysis as elaborated below. We refer to \cite{CGIK, Marshall} for an overview on the (truncated) moment problem.

\subsubsection*{\textbf{Truncated Moment Problem}} This problem has been extensively studied by Curto--Fialkow \cite{CF1, CF2, CF3}, Fialkow--Nie \cite{FN} and many more in the classical setting and by Curto et al.\ \cite{CGIK} for unital commutative $\R$-algebras.
It asks if there exists a non-negative Borel measure $\mu$ on $\R^n$ such that 
$$L(\boldsymbol{X})=\int \boldsymbol{X}^{(a_1,\ldots,a_n)} \mathrm{d}\mu(\boldsymbol{X})$$ for a given sequence $(y_a)_{a\in I_{n+1,d}}\subseteq \R$ and for all $a\in I_{n+1,d}$. We call $(y_a)_{a\in I_{n+1,d}}$ a \textit{truncated moment sequence} and say that this sequence \textit{admits a measure} if the truncated moment problem can be affirmatively answered with a Borel measure $\mu$. Any such $\mu$ is then called a \textit{representing measure} of $(y_a)_{a\in I_{n+1,d}}$ and we denote the set of all truncated moment sequences $(y_a)_{a\in I_{n+1,d}}$ that admit a measure by $\mathcal{R}_{n,d}$. Moreover, we set $\R[\boldsymbol{X}]_{\leq d}$ to be the $\R$-vector space of polynomials in $\boldsymbol{X}$ of degree at most $d$ with coefficients in $\R$ and observe that a truncated moment sequence $y:=(y_a)_{a\in I_{n+1,d}}\subseteq \R$ induces the \textit{Riesz functional}
$$\begin{array}{rccc} \mathcal{L}_y \colon& \R[\boldsymbol{X}]_{\leq d} &\to & \R \\ 
	&\sum\limits_{a\in I_{n+1,d}} g_a\boldsymbol{X}^{(a_1,\ldots,a_n)} & \mapsto & \sum\limits_{a\in I_{n+1,d}} g_ay_a.
\end{array}$$ Therefore, the truncated moment problem in particular asks if for a given linear functional $L\colon\R[\boldsymbol{X}]_{\leq d}\to\R$, there exists a non-negative Borel measure $\mu$ on $\R^n$ such that, for all $g\in\R[\boldsymbol{X}]_{\leq d}$, $$L(g)=\int g\mathrm{d}\mu.$$ 

A condition that is necessary for a truncated moment sequence $y:=(y_a)_{a\in I_{n+1,d}}$ to admit a measure is, for example, that the Riesz functional $\mathcal{L}_y$ is non-negative in any positive semidefinite $g\in\R[\boldsymbol{X}]_{\leq d}$. That is, $\mathcal{L}_y$ must lie in the dual cone of the cone $\mathcal{P}_{n,\leq d}$ of positive semidefinite $g\in\R[\boldsymbol{X}]_{\leq d}$. In fact, by \cite[Theorem 2.2.]{FN2}, we have $$\overline{\mathcal{R}_{n,d}}=\left\{y:=(y_a)_{a\in I_{n+1,2d}}\subseteq\R\mid \mathcal{L}_y\in \mathcal{P}_{n,\leq d}^\vee\right\}\simeq \mathcal{P}_{n,\leq d}^\vee.$$

\subsubsection*{\textbf{Homogeneous Truncated Moment Problem}} To transfer the above consideration into a homogeneous setting, we observe that a truncated moment sequence $y:=(y_a)_{a\in I_{n+1,d}}\subseteq \R$ admits a measure if and only if there exists a non-negative Borel measure $\mu$ supported on the $n$-dimensional unit sphere $\mathbb{S}^n$ in $\R^{n+1}$ such that $$y_a=\int_{\mathbb{S}^n} X^\alpha  \mathrm{d}\mu(X)$$ for all $a\in I_{n+1,d}$. In this sense, $y$ is a \textit{homogeneous} truncated moment sequence and we denote the set of homogeneous truncated moment sequences admitting a representing measure supported on $\mathbb{S}^n$ by $\mathcal{R}_{n+1,d}^h$. By \cite[Corollary 2.4. and Theorem 3.1.]{FN2}, we know that for even degrees $$\overline{\mathcal{R}_{n,2d}}=\mathcal{R}_{n+1,2d}^h=\left\{y:=(y_a)_{a\in I_{n+1,d}}\subseteq\R\mid \mathcal{L}_y\in \mathcal{P}_{n+1,2d}^\vee\right\}\simeq \mathcal{P}_{n+1,2d}^\vee.$$ Thus, the dual cone $\mathcal{P}_{n+1,2d}^\vee$ can be interpreted as the cone of homogeneous truncated moment sequences that admit representing measures supported on $\mathbb{S}^n$. 

Similar to our primal setting in \Cref{Sec:ConeProp}, testing membership in $\mathcal{P}_{n+1,2d}^\vee$ is a challenging task. Therefore, this task is often replaced by membership tests to the larger well-understood dual cone $\Sigma_{n+1,2d}^\vee$. Yet, membership in $\Sigma_{n+1,2d}^\vee$ is only necessary but not sufficient for membership in the subcone $\mathcal{P}_{n+1,2d}^\vee$. Moreover, (\ref{int:separation}) yields
$$\begin{cases} \Sigma_{n+1,2d}^\vee=C_0^\vee=\ldots=C_n^\vee=C_{n+1}^\vee\supsetneq \ldots \supsetneq C_{k-n}^\vee=\mathcal{P}_{n+1,2d}^\vee, & \mbox{if } n=2 \\
	\Sigma_{n+1,2d}^\vee=C_0^\vee=\ldots=C_n^\vee\supsetneq C_{n+1}^\vee\supsetneq \ldots \supsetneq C_{k-n}^\vee=\mathcal{P}_{n+1,2d}^\vee, & \mbox{else.}
\end{cases}$$ Therefore, for a strictly separating intermediate cone $C_i$ of $\Sigma_{n+1,2d}$ and $\mathcal{P}_{n+1,2d}$, we propose to establish membership tests for the dual cone $C_i^\vee$. The benefit of such tests is that they provide an affirmative answer for fewer linear functionals $L\colon\mathcal{F}_{n+1,2d}\to\R$ than membership tests for $\Sigma_{n+1,2d}^\vee$. 
This bring us to the two problems below.

\begin{customqu}{2} Give an explicit description of $C_i^\vee$. 
\end{customqu}

\begin{customqu}{3} Interpret $C_i^\vee$ in the setting of the homogeneous truncated moment problem. 
\end{customqu}

We are not only interested in $(n+1)$-ary $2d$-ics but also in those forms that are $\R$-linear combinations of the monomials $m_sm_t$ for $0\leq s,t\leq n+i$ for an a priori fixed $i\in\{0,\ldots,k-n\}$. Indeed, in \cite[Theorem 3.1]{GHK2}, we illuminated a connection between the cone $C_i$ and the $\R$-vector space $\mathfrak{S}_{n+i}:=\mathrm{span}_{\R}\{m_sm_t \mid 0\leq s,t\leq n+i\}$ by showing 
\begin{equation} \label{eq:spec} C_i\cap \mathfrak{S}_{n+i} = \mathcal{P}_{n+1,2d}\cap \mathfrak{S}_{n+i}.
\end{equation} This brings us to the \textit{$\mathcal{A}$-truncated moment problem} as elaborated below.

\subsubsection*{\textbf{$\mathcal{A}$-truncated moment problem}} This problem was studied by Nie \cite{Nie} and finds application in sparse polynomial optimization (cf.\ \cite{Lasserre1}). 
It asks if for a given finite set $\mathcal{A}\subseteq \N^{n}_0$ and a given \textit{truncated multisequence} $(y_{\boldsymbol{a}})_{\boldsymbol{a}\in \mathcal{A}}\subseteq\R$, there exists a non-negative Borel measure $\mu$ such that $$y_{\boldsymbol{a}}=\int \boldsymbol{X}^{\boldsymbol{a}}\mathrm{d}\mu(\boldsymbol{X})$$ for all $\boldsymbol{a}\in\mathcal{A}$. In the light of (\ref{eq:spec}), we therefore pose the following problem.

\begin{customqu}{4}
	For $i=0,\ldots,k-n$, consider the finite set $$\mathcal{A}_{i}:=\{\alpha_s+\alpha_t \mid 0\leq s,t\leq n+i\}\subseteq I_{n+1,2d} \subseteq \N_0^{n+1}$$ and relate $C_i^\vee$ to the $\mathcal{A}_i$-truncated moment problem.
\end{customqu}

\subsection{Cones along Toric Varieties} \label{sec:toric}
For our investigations in \cite{GHK, GHK2} and this paper, we chose $I_{n+1,d}$ to be lexicographically ordered. This choice was crucial in many proofs and cannot be neglected. We now illustrate with the help of \Cref{exmp:terdec} what happens in the non-Hilbert case of ternary decics if $I_{n+1,d}$ is ordered by another monomial order. 

\begin{exmp} \label{exmp:terdec} \textsc{Ternary Decics} \\
	Here, $n=2$, $d=5$ and $k=k(2,5)=20$. For $\alpha,\beta\in\N_0^3$, we set $\alpha < \beta$ if 
	\begin{itemize}
		\item $\alpha_0+\alpha_1 < \beta_0+\beta_1$ or 
		\item $\alpha_0+\alpha_1=\beta_0+\beta_1$ and $\alpha_0<\beta_0$ or
		\item $(\alpha_0,\alpha_1)=(\beta_0,\beta_1)$ and $\alpha_2<\beta_2$.
	\end{itemize}
	A straight forward computation verifies that $\leq$ is a monomial order. Therefore, ordering $I_{3,5}$ by $\leq$ starting with the greatest element, we obtain the ordered monomial basis $\{m_0,\ldots,m_{20}\}$ and we see that $m_0(X)=X_0^5$, $m_1(X)=X_0^4X_1$, $m_2(X)=X_0^3X_1^2$, $m_3(X)=X_0^2X_1^3$, $m_4(X)=X_0X_1^4$, $m_5(X)= X_1^5$. We furthermore compute
	\begin{eqnarray*}
		T_0&:=&\left\{[z]\in\PP^{20} \mid \exists x\in \C^3 \colon z_0=x_0^5\right\}=\PP^{20}, \\
		T_1&:=&\left\{[z]\in\PP^{20} \mid \exists x\in \C^3 \colon (z_0,z_1)=\left(x_0^5,x_0^4x_1\right)\right\}= T_0, \\
		T_2&:=&\left\{[z]\in\PP^{20} \mid \exists x\in \C^3 \colon (z_0,z_1,z_2)=\left(x_0^5,x_0^4x_1,x_0^3x_1^2\right)\right\}\subsetneq T_1, \\
		T_3&:=&\left\{[z]\in\PP^{20} \mid \exists x\in \C^3 \colon (z_0,z_1,z_2,z_3)=\left(x_0^5,x_0^4x_1,x_0^3x_1^2,x_0^2x_1^3\right)\right\}\subsetneq T_2, \\
		T_4&:=&\left\{[z]\in\PP^{20} \mid \exists x\in \C^3 \colon (z_0,z_1,z_2,z_3,z_4)=\left(x_0^5,x_0^4x_1,x_0^3x_1^2,x_0^2x_1^3,x_0x_1^4\right)\right\}\subsetneq T_3, \\
		T_5&:=&\left\{[z]\in\PP^{20} \mid \exists x\in \C^3 \colon (z_0,\ldots,z_5)=\left(x_0^5,x_0^4x_1,x_0^3x_1^2,x_0^2x_1^3,x_0x_1^4,x_1^5\right)\right\}\subsetneq T_4, \\
		T_6&:=&\left\{[z]\in\PP^{20} \mid \exists x\in \C^3 \colon (z_0,\ldots,z_6)=\left(x_0^5,\ldots,x_1^5,x_0^4x_2\right)\right\}=T_5, \\
		T_7&:=&\left\{[z]\in\PP^{20} \mid \exists x\in \C^3 \colon (z_0,\ldots,z_7)=\left(x_0^5,\ldots,x_0^4x_2,x_0^3x_1x_2\right)\right\}\subsetneq T_6, \\
		&\vdots & \\
		T_{20}&:=& \left\{[z]\in\PP^{20} \mid \exists x\in \C^3 \colon (z_0,\ldots,z_{20})=\left(x_0^5,\ldots,x_1x_2^4,x_2^5\right)\right\}\subsetneq T_{19}.
	\end{eqnarray*}
	Thus, we set $H_0:=T_0$, $H_1:=T_2$, $H_2:=T_3$, $H_3:=T_4$, $H_4:=T_5$, $H_5:=T_7$, $H_i:=T_{i+2}$ for $i=6,\ldots,18$ and let $V_i$ be the Zariski closure of $H_i$ in $\PP^{20}$ for $i=0,\ldots,18$. This construction gives us a specific filtration of projective varieties
	$$V(\PP^2)=V_{18} \subsetneq \ldots \subsetneq V_0=\PP^{20}$$ with a corresponding specific filtration of sets of real points 
	$$V(\PP^2)(\R)=V_{18}(\R) \subsetneq \ldots \subsetneq V_0(\R)=\PP^{20}(\R)$$ such that each inclusion appearing is strict. For $i=0,\ldots,18$, we lastly set $C_i:=C_{V_i}$ and obtain a specific cone filtration 
	\begin{equation} \label{eq:critfilt} \Sigma_{3,10}=C_0\subseteq \ldots \subseteq C_{18}=\mathcal{P}_{3,10}.
	\end{equation} In this filtration at least one inclusion has to be strict by Hilbert's 1888 theorem but \textit{it is not clear how many of these inclusions are strict and which ones.}
	
	Let us examine the subfiltration 
	\begin{equation} \label{eq:subfilt} \Sigma_{3,10}=C_0\subseteq C_1 \subseteq C_2 \subseteq C_3 \subseteq C_4
	\end{equation} of the cone filtration (\ref{eq:critfilt}) for strict inclusions. To this end, we set
	\begin{align*}
		q_1(Z)&:=Z_0Z_2-Z_1^2 & \mbox{and} \hspace*{1.8cm} p_1(\boldsymbol{Z})&:=Z_2-Z_1^2, \\
		q_2(Z)&:=Z_0Z_3-Z_1Z_2 & \mbox{and}\hspace*{1.8cm} p_2(\boldsymbol{Z})&:=Z_3-Z_1Z_2, \\
		q_3(Z)&=Z_0Z_4-Z_1Z_3 & \mbox{and} \hspace*{1.8cm}p_3(\boldsymbol{Z})&=Z_4-Z_1Z_3, \\ 
		q_4(Z)&=Z_0Z_5-Z_1Z_4 & \mbox{and}\hspace*{1.8cm} p_4(\boldsymbol{Z})&=Z_5-Z_1Z_4
	\end{align*} and consider the affine varieties $K_0:=\C^{20}$, $K_i:=\boldsymbol{\mathcal{V}}(p_1,\ldots,p_i)\subseteq \C^{20}$ for $i=1,\ldots,4$. Moreover, we let $W_i\subseteq\PP^{20}$ be the projective closure of $K_i\subseteq\C^{20}$ for $i=0,\ldots,4$. Similarly as in \cite[Theorem 1]{GHK}, we observe that $V_i=W_i$ for $i=0,\ldots,4$ and thus $V_i$ is a non-degenerate irreducible totally-real projective variety of codimension $i$ for $i=0,\ldots,4$. Moreover, we see that $V_0,\ldots,V_4$ are cones over the varieties $\tilde{V}_0,\ldots,\tilde{V}_{4}$ that are parametrized by
	\begingroup
	\allowdisplaybreaks
	\begin{align*} 
		\chi_0 \colon \PP^2 & \dashrightarrow  \PP^{1} \\ \ [x] & \mapsto  \left[x_0^5:x_0^4x_1\right], \\
		\chi_1 \colon \PP^2 & \dashrightarrow  \PP^{2} \\ \ [x] & \mapsto  \left[x_0^5:x_0^4x_1:x_0^3x_1^2\right],  \\
		\chi_2 \colon \PP^2 & \dashrightarrow  \PP^{3} \\ \ [x] & \mapsto  \left[x_0^5:x_0^4x_1:x_0^3x_1^2:x_0^2x_1^3\right], \\
		\chi_3 \colon \PP^2 & \dashrightarrow  \PP^{4} \\ \ [x] & \mapsto  \left[x_0^5:x_0^4x_1:x_0^3x_1^2:x_0^2x_1^3:x_0x_1^4\right], \\ 
		\chi_4 \colon \PP^2 & \dashrightarrow  \PP^{5} \\ \ [x] & \mapsto  \left[x_0^5:x_0^4x_1:x_0^3x_1^2:x_0^2x_1^3:x_0x_1^4:x_1^5\right].
	\end{align*} 
	\endgroup
	In particular, $\chi_4$ can be interpreted as the Veronese embedding (of degree $5$) \begin{eqnarray*} V^{(5)} \colon \PP^1 & \to & \PP^5, \\ 
		\ [x_0:x_1] &\mapsto & \left[x_0^5:x_0^4x_1:x_0^3x_1^2:x_0^2x_1^3:x_0x_1^4:x_1^5\right] \end{eqnarray*}
	 and thus $V_4$ is a multiple cone over the Veronese variety $V^{(5)}(\PP^1)$ in $\PP^{k(1,5)}$. According to \cite[Example 13.4.]{Harris}, the Hilbert polynomial of the irreducible projective variety $V^{(5)}(\PP^1)$ is given by $$p_{V^{(5)}(\PP^1)}(T)=\dbinom{5T+1}{1}=5T+1\in\C[T].$$
	 Therefore, $\deg\left(V^{(5)}(\PP^1)\right)=5$ and thus, we conclude that $V_4$ has degree $5$. 
	
	Altogether, $V_4$ is a non-degenerate irreducible totally-real projective variety of minimal degree such that $V(\PP^n)(\R)\subseteq V_4(\R)$. Therefore, \cite[Proposition 1]{GHK} yields $\Sigma_{3,10}=C_4$, which shows that the subfiltration (\ref{eq:subfilt}) of the cone filtration (\ref{eq:critfilt}) collapses. Hence, there are at most $13$ strictly separating intermediate cones between $\Sigma_{3,10}$ and $\mathcal{P}_{3,10}$ in (\ref{eq:critfilt}). On contrary to this, with the lexicographic order, there were $\mu(2,5)=k(2,5)-4-2=14$ strictly separating intermediate cones between $\Sigma_{3,10}$ and $\mathcal{P}_{3,10}$ in (\ref{ConeFiltration}).
\end{exmp}

The \Cref{exmp:terdec} above illustrates that different monomial orders may lead to different behaviours of inclusions in corresponding cone filtrations %
(similar to (\ref{ConeFiltration}) and (\ref{eq:critfilt}))
that can be constructed by our method. Therefore, it would be interesting to investigate \textit{which properties of a monomial order influence the occurence of cone equalities and inequalities, and how?} \\

The specific projective varieties considered in \cite{GHK, GHK2} and this paper 
are projective varieties that are described by monomials. This brings us to \textit{toric varieties} and we refer an interested reader to \cite{CLS} for an introduction to this concept. For the convenience of the reader, here, we include the definition of a projective toric variety in the framework of this paper.

\begin{defn} For a finite set $\mathcal{B}:=\{\beta_0,\ldots,\beta_s\}\subseteq I_{n+1,d}\subseteq \Z^{n+1}$ ($s\in\N$), we set
	\begin{eqnarray*} \phi_\mathcal{B} \colon (\C^\times)^{n+1} &\to& (\C^\times)^{s+1} \\
		x&\mapsto & \left(x^{\beta_0},\ldots,x^{\beta_s}\right).
	\end{eqnarray*} and consider the canonical map $\pi \colon (\C^\times)^{s+1} \to \PP^{s}, y\mapsto [y]$. The Zariski closure of the image of $\pi\circ\phi_{\mathcal{B}}$ in $\PP^s$ is a \textit{projective toric variety}.
\end{defn}

The projective varieties $\tilde{V}_0,\ldots,\tilde{V}_{k-n}$ are thus projective toric varieties and, in this sense, also $V_0,\ldots,V_{k-n}\subseteq\PP^k$ can be interpreted as projective toric varieties. The same remains true when $I_{n+1,d}$ is ordered by a monomial order other than the lexicographical. 

Since toric varieties are well-understood, they might serve as a good starting point for further investigations that attempt to establish a general criterion for an intermediate cone $C_W$ between $\Sigma_{n+1,2d}$ and $\mathcal{P}_{n+1,2d}$ along a projective variety $W$ to be strictly separating in non-Hilbert cases. We therefore propose the following question.

\begin{customqu}{5} \label{OP:1}
	For projective toric varieties $\mathfrak{W}_1$, $\mathfrak{W}_2\subseteq\PP^k$ such that $\mathfrak{W}_1\subseteq \mathfrak{W}_2$, let $q\in\mathcal{F}_{k+1,2}$ be locally PSD on $\mathfrak{W}_1(\R)$. When does there exist a quadratic form $\mathfrak{q}\in\mathcal{F}_{k+1,2}$ such that $\mathfrak{q}$ vanishes on $V(\PP^n)$ and $(q+\mathfrak{q})\vert_{_{\mathfrak{W}_2(\R)}}\geq 0$?
\end{customqu}

Replacing the projective toric variety $V(\PP^n)$ by some arbitrary projective toric variety $\mathfrak{W}_0\subseteq\PP^k$ in \Cref{OP:1}, we obtain the more general problem below. 

\begin{customqu}{6}
	For three projective toric varieties $\mathfrak{W}_0$, $\mathfrak{W}_1$, $\mathfrak{W}_2\subseteq\PP^k$ such that $\mathfrak{W}_0\subseteq \mathfrak{W}_1\subseteq \mathfrak{W}_2$, let $q\in\mathcal{F}_{k+1,2}$ be locally PSD on $\mathfrak{W}_1(\R)$. When does there exist a quadratic form $\mathfrak{q}\in\mathcal{F}_{k+1,2}$ such that $\mathfrak{q}$ vanishes on $\mathfrak{W}_0$ and $(q+\mathfrak{q})\vert_{_{\mathfrak{W}_2(\R)}}\geq 0$?
\end{customqu}

Our considerations regarding membership tests for the intermediate cones $C_i$'s and the duals of $C_i$'s as in \Cref{Sec:ConeProp} and \Cref{Sec:Dual} can be reconsidered for intermediate cones coming from arbitrary projective toric vareities. 

\section*{Acknowledgements} 
  The authors would like to thank \textit{Mathematisches Forschungsinstitut Oberwolfach} for its hospitality in March 2023. The first author acknowledges support through \textit{Oberwolfach Leibniz Fellows} program. The second author is grateful for the support provided by the scholarship program of the University of Konstanz under the \textit{Landesgraduiertenfördergesetz} and the \textit{Studienstiftung des deutschen Volkes}. The third author acknowledges the support of \textit{Ausschuss für Forschungsfragen der Universität Konstanz}.


\begin{thebibliography}{9}

    \bibitem{CL2}
	M.\ D.\ Choi and T.\ Y.\ Lam, Extremal positive semidefinite forms, \emph{Math.\ Ann.} \textbf{231} (1977/78), no.\ 1, 1 -- 18.

    \bibitem{CL}
	M.\ D.\ Choi and T.\ Y.\ Lam, An old question of Hilbert, in: \emph{Proceedings Queen's Univ.: Conference on Quadratic Forms}, Kingston, 1976, pp.\ 385 -- 405, \emph{Queen's Papers in Pure and Appl.\ Math.} \textbf{46} (1977), 385 -- 405.

    \bibitem{CLR}
	M.\ D.\ Choi, T.\ Y.\ Lam and B.\ Reznick, Sums of squares of real polynomials, in: \emph{$K$-theory and algebraic geometry: connections with quadratic forms and division algebras}, Santa Barbara, 1992, pp.\ 103 -- 126, \emph{Proc.\ Sympos.\ Pure Math.} \textbf{58} (1995), Part 2.

	\bibitem{CLS}
	D.\ A.\ Cox, J.\ B.\ Little and H.\ K.\ Schenck, \emph{Toric varieties.}, Grad.\ Stud.\ Math.\ 124, American Mathematical Society, Providence, 2011.
	
	\bibitem{CGIK}
	R.\ E.\ Curto, M.\ Ghasemi, M.\ Infusino and S.\ Kuhlmann, The truncated moment problem for unital commutative $\R$-algebras, \emph{J.\ Operator Theory} \textbf{90} (2023), no.\ 1, 223--261.
	
	\bibitem{CF1}
	R.\ E.\ Curto and L.\ A.\ Fialkow, Solution of the truncated complex moment problem for flat data, \emph{Mem.\ Amer.\ Math.\ Soc.} \textbf{119} (1996), no.\ 568, x+52 pp.
	
	\bibitem{CF2}
	R.\ E.\ Curto and L.\ A.\ Fialkow, Truncated $K$-moment problems in several variables, \emph{J.\ Operator Theory} \textbf{54} (2005), no.\ 1, 189--226.
	
	\bibitem{CF3}
	R.\ E.\ Curto and L.\ A.\ Fialkow, An analogue of the Riesz--Haviland theorem for the truncated moment problem, \emph{J.\ Funct.\ Anal.} \textbf{255} (2008), no.\ 10, 2709--2731.
	
	\bibitem{Dieu}
	J.\ Dieudonné, J., Sur la séparation des ensembles convexes, \emph{Math.\ Ann.} \textbf{163} (1966), 1--3.
	
	\bibitem{FN2}
	L.\ A.\ Fialkow, and J.\ Nie, J., Positivity of Riesz functionals and solutions of quadratic and quartic moment problems, \emph{J.\ Funct.\ Anal.} \textbf{258} (2010), no.\ 1, 328--356.
	
	\bibitem{FN}
	L.\ A.\ Fialkow and J.\ Nie, The truncated moment problem via homogenizations and flat extensions, \emph{J.\ Funct.\ Anal.} \textbf{263} (2012), no.\ 6, 1682--1700.
	
    \bibitem{GHK}
	C.\ Goel, S.\ Hess and S.\ Kuhlmann, Cones between the cones of positive semidefinite forms and sums of squares, \emph{Adv.\ Geom.} \textbf{24} (2024), no.\ 3, 357--367.
	
	 \bibitem{GHK2}
	C.\ Goel, S.\ Hess and S.\ Kuhlmann, A Refinement of Hilbert's 1888 Theorem: Separating Cones along the Veronese Variety, \textit{arXiv} 2401.03813.
	
	\bibitem{Harris}
	J.\ Harris, \emph{Algebraic geometry. A first course.}, Grad.\ Texts in Math.\ 133, Springer, New York, 1992.
		

    \bibitem{HeltonNie}
    J.\ W.\ Helton and J.\ Nie, Sufficient and necessary conditions for semidefinite representability of convex hulls and sets, \emph{SIAM J.\ Optim.} \textbf{20} (2009), no.\ 2, 759 -- 791.
    
    \bibitem{Hess}
    S.\ Hess, Hilbert’s 1888 Theorem for Cones along the Veronese Variety (PhD thesis, October 2024), to appear in Konstanzer Online -- Publikations -- Sytem.

	
	\bibitem{Hilbert}
	D.\ Hilbert, Über die Darstellung definiter Formen als Summe von Formenquadraten, \emph{Math.\ Ann.} \textbf{32} (1888), no.\ 3, 342 -- 350.
		
	\bibitem{KM}
	M.\ Krein and D.\ Milman, On extreme points of regular convex sets, \emph{Studia Math.} \textbf{9} (1940), 133--138.
	
	\bibitem{Lasserre1}
	J.\ B.\ Lasserre, Convergent SDP-relaxations in polynomial optimization with sparsity, in: \emph{Mathematical software -- ICMS 2006.}, Ed.\  A.\ Iglesias, A. and N.\ Takayama, N., Lecture Notes in Comput.\ Sci.\ 4151 (2006), Springer Berlin, Heidelberg, 263--272.
	
	\bibitem{Marshall}
	M.\ Marshall, M., \emph{Positive polynomials and sums of squares.}, Math.\ Surveys Monogr., 146, American Mathematical Society, Providence, 2008.
	
    \bibitem{Motzkin}
	T.\ S.\ Motzkin, The arithmetic-geometric inequality, in: \emph{Proc.\ Sympos.\ Wright-Patterson Air Force Base: Inequalities}, Ohio, 1965, pp.\ 205 -- 224, Academic Press, New York, 1967.


 

	
	\bibitem{Nie}
	J.\ Nie, The $\mathcal{A}$-truncated $K$-moment problem, \emph{Found.\ Comput.\ Math.} \textbf{14} (2014), no.\ 6, 1243--1276.
	
	\bibitem{Ovchinnikov}
	S.\ Ovchinnikov, \emph{Functional analysis. An introductory course.}, Universitext, Springer, Cham, 2018.
	
    \bibitem{Rez2}
    B.\ Reznick, Forms derived from the arithmetic-geometric inequality, \emph{Math.\ Ann.} \textbf{283} (1989), no.\ 3, 431 -- 464.

    \bibitem{Reznick}
    B.\ Reznick, Sums of even powers of real linear forms, \emph{Mem.\ Amer.\ Math.\ Soc.} \textbf{96} (1992), no.\ 463.

    \bibitem{Rob}
	R.\ M.\ Robinson, Some definite polynomials which are not sums of squares of real polynomials, in \emph{Selected questions of algebra and logic (a collection dedicated to the memory of A.\ I.\ Mal'cev)},  Izdat.\ "Nauka" Sibirsk.\ Otdel., Novosibirsk (1973), 264 -- 282. 


    \bibitem{Scheiderer}
    C.\ Scheiderer, Spectrahedral shadows, \emph{SIAM J.\ Appl.\ Algebra Geom.} \textbf{2} (2018), no.\ 1, 26 -- 44.
    
    \bibitem{Zaline}
    	C.\ Z\u{a}linescu, \emph{Convex analysis in general vector spaces.}, World Scientific Publishing Co., River Edge, 2002.
\end{thebibliography}
\end{document}